\newtheorem{theorem}{Theorem}[section]
\newtheorem{coro}[theorem]{Corollary}
\author{Hongliang Lu\affiliationmark{1}\thanks{Supported by the National Natural Science Foundation of China, No. 11471257 and 11871391.}
  \and Qinglin Yu\affiliationmark{2,3}\thanks{Supported by the Discovery Grant from the Natural Sciences and Engineering Research Council of Canada, and the Shanxi Hundred-Talent Program of Shanxi Province. \\Corresponding email:
yu@tru.ca}}
\title[DMTCS]{Binding Number, Toughness and General Matching Extendability in Graphs}
\affiliation{
  % one line per affiliation, no postal codes, grant numbers or similar
  Department of Mathematics, Xi'an Jiaotong University, Xi'an, China\\
  School of Science, Xi'an Polytechnic University, Xi'an, China\\
  Department of Mathematics and Statistics, Thompson Rivers University, Kamloops, BC, Canada}
\keywords{Binding number, toughness, perfect matching, matching extendability}
\begin{document}
\publicationdetails{21}{2019}{3}{1}{4723}
\maketitle
\begin{abstract}
A connected graph $G$ with at least $2m + 2n + 2$ vertices which contains a
perfect matching is $E(m, n)$-{\it extendable}, if for any two sets of
disjoint independent edges $M$ and $N$ with $|M| = m$ and $|N|= n$, there is
a perfect matching $F$ in $G$ such that $M\subseteq F$ and $N\cap F=\emptyset$. Similarly, a connected graph with at least $n+2k+2$ vertices is called $(n,k)$-{\it extendable} if  for any vertex set $S$ of size $n$ and any matching $M$ of size $k$ of $G-S$, $G-S-V(M)$ contains a perfect matching.
Let $\varepsilon$ be a small positive constant, $b(G)$ and $t(G)$ be the binding number and toughness of a graph $G$. The two main theorems of this paper are: for every graph $G$ with sufficiently large order, 1) if $b(G)\geq 4/3+\varepsilon$, then $G$ is $E(m,n)$-extendable and also $(n,k)$-extendable; 2)  if $t(G)\geq 1+\varepsilon$ and $G$ has a high connectivity, then $G$ is $E(m,n)$-extendable and also $(n,k)$-extendable. It is worth to point out that the binding number and toughness conditions for the existence of the general matching extension properties are almost same as that for the  existence of perfect matchings.
\end{abstract}

\section{Introduction}
\label{sec:in}

In this paper, we only consider simple connected graphs.
Let $G$ be a graph with vertex set $V(G)$ and edge set $E(G)$.
A \emph{matching} is a set of independent edges and we often refer a matching with $k$ edges as a $k$-{\it matching}. For a matching $M$,
we use $V(M)$ to denote the set of the endvertices of the edges in $M$
and $|M|$ to denote the number of edges in $M$.  A matching is called a \emph{perfect matching} if it covers all vertices of graph $G$.
For $S\subseteq V(G)$, we write $G[S]$ for the subgraph of $G$
induced by $S$ and  $G - S$ for $G[V(G)\backslash S]$. The number of
odd components (i.e., components with odd order) and the number of components of $G$ are denoted
by $c_0(G)$ and $c(G)$, respectively. Let $N_G(S)$ denote the set of
neighbors of a set $S$ in a graph $G$, and $\kappa(G)$  denote the vertex connectivity of graph  $G$.

Let $M$ be a matching of $G$. If there is a matching $M'$ of $G $
such that $M\subseteq M'$, we say that $M$ can be extended to $M'$
or $M'$ is an \emph{extension} of $M$. Suppose that $G$ is a
connected graph with perfect matchings. If each $k$-matching can be
extended to a perfect matching in $G$, then $G$ is called \emph{$k$-extendable}. To avoid triviality, we require that
$|V(G)|\geq  2k+2$ for $k$-extendable graphs. This family of
graphs was introduced and studied first by \cite{Pl80}. A graph $G$ is called
{\it $n$-factor-critical} if after deleting any $n$ vertices the
remaining subgraph of $G$ has a perfect matching, which was
introduced in \cite{Yu} and was a generalization of the notions of the well-known
factor-critical graphs and bicritical graphs (the cases corresponding to $n = 1$
and $2$, respectively). Note that every connected factor-critical graph is 2-edge-connected (see \cite{Yu}).

Let $G$ be a graph and let $n, k$ be nonnegative integers
such that $|V(G)|\geq  n+2k+2$ and $|V(G)|-n \equiv 0 \pmod{2}$. If deleting any $n$ vertices from $G$ the remaining subgraph of $G$ contains a $k$-matching and moreover, each $k$-matching in the subgraph can be
extended to a perfect  matching, then $G$ is called \emph{$(n, k)$-extendable} (\cite{LY01}). This term can be considered as a general framework to unify the concepts of $n$-factor-criticality and
$k$-extendability. In particular, $(n, 0)$-extendable graphs are exactly
$n$-factor-critical graphs and $(0, k)$-extendable graphs are the same
as $k$-extendable graphs. A graph is called \emph{$E(m,n)$-extendable} if deleting edges of any $n$-matching, the resulted graph is $m$-extendable (\cite{PA96}).  $E(m,0)$-extendability is equivalent to $m$-extendability, and $(n,k)$-extendability and $E(m, n)$-extendability are referred as general matching extensions, which are widely studied in graph theory (see \cite{Plu94, Plu96, Plu}).

For a non-complete graph $G$, its \emph{toughness} is defined by
\[
t(G) = \min_{S\subset V(G)} \frac{|S|}{c(G-S)}
\]
where $S$ is taken over all cut-sets of $G$.  The \emph{binding number} $b(G)$ is defined to be the minimum, taken
over all $S\subseteq V (G)$ with $S\neq \emptyset$ and $N_G(S)\neq V (G)$, of the ratios $\frac{|N_G(S)|}{|S|}$.

Toughness and binding number have been effective graphic parameters for studying factors and matching extensions in graphs.
For instances, 1-tough graphs guarantee the existence of perfect matchings (see \cite{Chvatal73}) and graphs with $b(G) \geq \frac{4}{3}$ contain perfect matchings (see \cite{Wood73}).  There are sufficient conditions with respect to $t(G)$ and $b(G)$ in terms of $m, n, k$ to ensure the existences of $k$-extendability, $E(m,n)$-extendability and other matching extensions (see \cite{Chen95,LY98,Plu88,Plu}). Moreover, \cite{RW02} proved a remarkable result that a graph with $b(G)$ slightly greater than $\frac{4}{3}$ ensure $k$-extendability if the order of $G$ is sufficiently large. Recently, \cite{PS2016} extended this result to $E(m,n)$-extendability. In this paper, we continue the study in this direction and prove that the essential bounds of $t(G)$ and $b(G)$ (i.e., $1$ and $\frac{4}{3}$) which guarantee the existence of a perfect matching can also ensure the existence of all general matching extensions mentioned earlier.

\cite{Tutte47} gave a characterization for a graph to have a perfect matching.
\begin{theorem}[\cite{Tutte47}]\label{Tutte}
Let $G$ be a graph with even order. Then $G$ contains a perfect matching  if and only if for any $S\subseteq V(G)$
$$c_0(G-S)\leq |S|.$$
\end{theorem}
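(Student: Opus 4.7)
The plan is to handle the two directions separately: necessity by a direct counting argument, and sufficiency by contradiction via an edge-maximal counterexample.

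For necessity, I would suppose $G$ has a perfect matching $M$ and fix any $S \subseteq V(G)$. For each odd component $C$ of $G - S$, the subgraph $G[C]$ has odd order, so $M$ cannot match all vertices of $C$ within $C$; hence at least one vertex of $C$ is matched by $M$ to a vertex of $S$, and these matching edges use distinct vertices of $S$ for distinct odd components. Therefore $c_0(G - S) \leq |S|$.

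For sufficiency I would argue by contradiction. Assume $G$ satisfies the Tutte condition but has no perfect matching. Since adding edges can only decrease $c_0(G-S)$, the Tutte condition is preserved under edge additions; so among all supergraphs of $G$ on the same vertex set that satisfy the Tutte condition but admit no perfect matching, I would choose an edge-maximal one $G^*$. Let $U \subseteq V(G^*)$ be the set of vertices adjacent to every other vertex. The central claim to establish is that every component of $G^* - U$ is a complete graph.

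To prove this claim, I would assume otherwise; then some component contains a path $x, y, z$ with $xz \notin E(G^*)$, and since $y \notin U$ there exists $w \neq x, z$ with $yw \notin E(G^*)$. By edge-maximality of $G^*$, both $G^* + xz$ and $G^* + yw$ possess perfect matchings $M_1$ and $M_2$. Analyzing the symmetric difference $M_1 \triangle M_2$, which is a disjoint union of even alternating cycles, I would trace the cycle containing $xz$ (and that containing $yw$) and splice pieces of $M_1$ and $M_2$ together to produce a perfect matching lying entirely in $G^*$, contradicting the choice of $G^*$. This symmetric-difference surgery, which must carefully handle the cases where $xz$ and $yw$ lie on the same or different cycles, is the main technical obstacle.

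Once the structural claim is in hand, I would set $S = U$ in the Tutte condition. The $c_0(G^* - U)$ odd clique components can each be matched to a distinct vertex of $U$; the unused vertex in each such clique together with every vertex of the even cliques can be perfectly matched inside their respective cliques; and the remaining vertices of $U$, even in number since $|V(G^*)|$ is even, can be matched among themselves inside the complete graph $G^*[U]$. This exhibits a perfect matching of $G^*$, the desired contradiction, completing the proof.
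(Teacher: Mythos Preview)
The paper does not prove Theorem~\ref{Tutte}; it is quoted as a classical result of Tutte and used as a black box in the later arguments, so there is no in-paper proof to compare against. Your proposal is the standard Lov\'asz edge-maximal counterexample proof, and the outline is correct: the necessity direction is fine, and for sufficiency the structural claim that each component of $G^*-U$ is a clique, followed by the explicit construction of a perfect matching from the Tutte condition at $S=U$, is exactly how that argument goes. The only place requiring real care is the symmetric-difference surgery you flag; just make sure that when $xz$ and $yw$ lie on the same alternating cycle you choose the arc of that cycle between $z$ (or $x$) and $y$ not containing $w$, so that the edge $yw$ is not needed when you switch to $M_2$ along that arc and close up with the edge $xy$ or $zy$.
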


The following result is an extension of Tutte's theorem and also a lean version of a comprehensive structure theorem for matchings, due to Gallai (1964) and Edmonds (1965). See \cite{LoPl86} for a detailed statement and discussion of this theorem.

\begin{theorem}[see \cite{LoPl86}]\label{GE645}
Let $G$ be a graph with even order. Then $G$ contains no perfect matchings if and only if there exists a set $S\subset V(G)$ such that
$$fc(G-S)\ge |S|+2,$$
where $fc(G-S)$ denotes the number of factor-critical components of $G-S$.
\end{theorem}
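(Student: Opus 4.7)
The plan is to prove the two directions separately; the forward direction is essentially Tutte's theorem, while the reverse direction requires the Gallai--Edmonds decomposition.

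For sufficiency, suppose such a set $S$ exists. Every factor-critical component has odd order, so $c_0(G-S)\ge fc(G-S)\ge |S|+2>|S|$, which violates Tutte's condition in Theorem~\ref{Tutte}. Hence $G$ has no perfect matching. This direction is essentially a one-line consequence of Theorem~\ref{Tutte}.

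For necessity, assume $G$ has even order but no perfect matching. I would invoke the Gallai--Edmonds decomposition: let
\[
D=\{v\in V(G):\text{some maximum matching of }G\text{ misses }v\},\quad A=N_G(D)\setminus D,\quad C=V(G)\setminus(D\cup A).
\]
The three structural facts I need are: (i) every component of $G[D]$ is factor-critical; (ii) $G[C]$ has a perfect matching; (iii) in any maximum matching, every vertex of $A$ is matched to a vertex lying in a distinct component of $G[D]$, so the Berge--Tutte deficiency formula gives $|V(G)|-2\nu(G)=c(G[D])-|A|$, where $\nu(G)$ denotes the matching number. Taking $S:=A$, each component of $G-S$ is either a component of $G[D]$ (hence factor-critical) or a component of $G[C]$ (which has even order and thus is not factor-critical), so $fc(G-S)=c(G[D])$. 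Since $G$ has even order but no perfect matching, the deficiency is a positive even number, so at least $2$, giving $c(G[D])\ge |A|+2=|S|+2$, as required.

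The main obstacle is establishing facts (i) and (iii) of the Gallai--Edmonds decomposition. The cleanest route is an augmenting-path argument: if some vertex $v$ in a component $K$ of $G[D]$ were covered by every maximum matching of $K$, one could combine a maximum matching of $G$ missing a vertex $u\in K$ with a near-perfect matching of $K$ missing $v$ and an alternating $u$--$v$ path inside $K$, producing either a larger matching in $G$ or a maximum matching that misses a neighbour of $A$ inconsistent with the definition of $D$; iterating this argument yields factor-criticality of each component of $G[D]$ and simultaneously the injective matching from $A$ into the components of $G[D]$. Fact (ii) is essentially a direct consequence of the definition of $D$ together with the already-established matching between $A$ and the components of $G[D]$. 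Given the length of a full Gallai--Edmonds proof, in the paper itself it is natural (and the excerpt already signals this) to cite \cite{LoPl86} rather than reproduce the argument.
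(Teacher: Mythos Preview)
Your argument is correct: the sufficiency direction is an immediate consequence of Theorem~\ref{Tutte}, and for necessity taking $S=A$ from the Gallai--Edmonds decomposition gives $fc(G-S)=c(G[D])\ge |A|+2$ by the deficiency formula and parity. The paper itself does not supply a proof of this statement; it is quoted as a known refinement of Tutte's theorem, with the reader referred to \cite{LoPl86} for the Gallai--Edmonds structure theorem, so your approach is precisely the one the paper points to.
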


The proofs of the main theorems require the following two results as lemmas.

\begin{theorem}[\cite{LY01}]\label{LY01}
If $G$ is an $(n, k)$-extendable graph and $n\geq 1,k\geq 2$, then $G$ is also $(n + 2,
k-2)$-extendable.
\end{theorem}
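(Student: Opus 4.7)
The plan is to reduce $(n+2,k-2)$-extendability to $(n,k)$-extendability by a direct matching-swap argument. Given $S\subseteq V(G)$ with $|S|=n+2$ and a $(k-2)$-matching $M$ of $G-S$, pick two vertices $u_1,u_2\in S$ and put $S^{*}:=S\setminus\{u_1,u_2\}$, so $|S^{*}|=n$. We will try to construct two independent edges $e_1,e_2$ disjoint from $M$ such that $N:=M\cup\{e_1,e_2\}$ is a $k$-matching of $G-S^{*}$; by $(n,k)$-extendability, such an $N$ would extend to a perfect matching $F$ of $G-S^{*}$, and $e_1,e_2$ will be chosen so that a perfect matching of $G-S-V(M)$ can be read off from $F$ by a small local modification.

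Two favourable configurations work directly. \textbf{(a)} If some pair $u_1,u_2\in S$ is adjacent in $G$ and $G-S-V(M)$ contains an edge $xy$, take $e_1=u_1u_2$ and $e_2=xy$. Then $F\setminus M$ is a perfect matching of $\{u_1,u_2\}\cup (V(G)\setminus S\setminus V(M))$ containing the edge $u_1u_2$, so $F\setminus (M\cup\{u_1u_2\})$ is a perfect matching of $V(G)\setminus S\setminus V(M)$. \textbf{(b)} If there exist distinct $u_1,u_2\in S$ and an edge $v_1v_2\in E(G-S-V(M))$ with $u_1v_1,u_2v_2\in E(G)$, take $e_1=u_1v_1$ and $e_2=u_2v_2$. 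In $F\setminus M$ the vertices $u_1,v_1,u_2,v_2$ are matched by $e_1,e_2$; replacing the pair $\{e_1,e_2\}$ by the single edge $v_1v_2$ gives a perfect matching of $V(G)\setminus S\setminus V(M)$.

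The hard part will be showing that one of configurations (a), (b) is always available. If neither is, then either $V(G)\setminus S\setminus V(M)$ is an independent set in $G$, or $S$ is independent and every edge $v_1v_2\in E(G-S-V(M))$ satisfies $|N_S(v_1)\cup N_S(v_2)|\leq 1$. Under either structural restriction, applying Theorem~\ref{GE645} to $G-S-V(M)$ yields a barrier $T$ together with factor-critical components $D_1,\ldots,D_p$ satisfying $p\geq |T|+2$. One can then exhibit a specific $k$-matching $N'$ of $G-S^{*}$, built from $M$ together with two carefully chosen edges incident to $\{u_1,u_2\}$ whose other endpoints lie in $T$ or in some $D_i$, such that any perfect-matching extension of $N'$ in $G-S^{*}$ would have to match each component $D_i$ to a vertex of a set of size at most $|T|$; since $p\geq |T|+2$, no such extension can exist, contradicting $(n,k)$-extendability. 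Hence (a) or (b) must hold, which produces the desired perfect matching of $G-S-V(M)$.
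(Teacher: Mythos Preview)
The paper does not prove this theorem; it is quoted from \cite{LY01} and used as a black-box lemma, so there is no in-paper argument to compare your attempt against.

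On the merits of your sketch: configurations (a) and (b) are handled correctly, and the matching-swap in each case does yield a perfect matching of $G-S-V(M)$. The difficulty is entirely in the ``hard part'', and there your argument has two genuine gaps. First, your description of when (b) fails is inaccurate: for a fixed edge $v_1v_2$ of $G-S-V(M)$, configuration (b) is unavailable precisely when one cannot choose distinct $u_1\in N_S(v_1)$ and $u_2\in N_S(v_2)$; this occurs not only when $|N_S(v_1)\cup N_S(v_2)|\le 1$ but also whenever one of $N_S(v_1)$, $N_S(v_2)$ is empty while the other is arbitrarily large. So the structural dichotomy you state does not actually cover all residual cases.

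Second, and more seriously, the promised contradiction is only asserted. You do not specify how the two extra edges of $N'$ are chosen, and it is not clear why every perfect-matching extension of $N'$ in $G-S^{*}$ would have to match each factor-critical component $D_i$ into $T$. Once $u_1,u_2$ are restored to the graph, the components $D_i$ may acquire neighbours in $\{u_1,u_2\}$; if, say, one of your forced edges joins $u_1$ to a vertex inside some $D_i$, then in an extension $F$ that component need not send any edge to $T$ at all, and the pigeonhole count $p\ge |T|+2$ no longer blocks the extension. Thus the hard case is at present an outline rather than a proof; to complete it you would need either additional configurations beyond (a) and (b), or an auxiliary structural fact about $(n,k)$-extendable graphs (such as a connectivity bound) that rules the degenerate situations out.
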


\begin{theorem}[\cite{Plu88-2}]\label{P88}
If a graph $G$ is connected and $k$-extendable graph ($k \geq 1$), then $G-e$ is $(k-1)$-extendable for any edge $e$ of $G$.
\end{theorem}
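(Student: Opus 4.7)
The plan is to argue by contradiction. Suppose $G - e$ is not $(k-1)$-extendable for some edge $e = uv$, so that some $(k-1)$-matching $M$ of $G - e$ does not extend to a perfect matching of $G - e$. Writing $H = G - V(M)$, the graph $H$ admits a perfect matching (since $k$-extendability implies $(k-1)$-extendability), whereas $H - e$ does not. If $u$ or $v$ belongs to $V(M)$ then $H - e = H$, a contradiction; hence $u, v \notin V(M)$ and $M \cup \{e\}$ is a $k$-matching of $G$.

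Apply Tutte's theorem (Theorem \ref{Tutte}) to $H - e$: some $S \subseteq V(H)$ satisfies $c_0(H - e - S) \geq |S| + 2$, while $c_0(H - S) \leq |S|$ holds because $H$ has a perfect matching. Comparing the two graphs rules out every configuration except the following: $u, v$ lie outside $S$ in distinct odd components $C_u, C_v$ of $H - e - S$, and these merge via $e$ into an even component $C$ of $H - S$, forcing the equalities $c_0(H - e - S) = |S| + 2$ and $c_0(H - S) = |S|$.

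The final step is to construct a $k$-matching of $G$ failing to extend, contradicting $k$-extendability. If $|C_u| \geq 3$ (or symmetrically $|C_v| \geq 3$), pick a neighbor $x \in C_u \setminus \{u\}$ of $u$; then $|C_u - u - x|$ is odd, so $C_u - u - x$ has at least one odd component, and together with $C_v$ this yields $c_0(H - S - \{u, x\}) \geq |S| + 2$, so $M \cup \{ux\}$ does not extend. In the sub-case $|C_u| = |C_v| = 1$ with some $s \in S$ adjacent to $u$ (or symmetrically $v$), the $k$-matching $M \cup \{us\}$ similarly fails to extend, as deleting $u$ from the even component $\{u, v\} \cup \{e\}$ of $H - S$ leaves the odd singleton $\{v\}$, giving $c_0(H - S - \{u\}) = |S| + 1 \geq |S - \{s\}| + 2$.

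The main obstacle is the remaining configuration: $|C_u| = |C_v| = 1$ with $N_H(u) = \{v\}$ and $N_H(v) = \{u\}$, so $\{u, v\}$ is an isolated-edge component of $H$ and $N_G(u), N_G(v) \subseteq V(M) \cup \{u, v\}$. Here I plan to invoke the $(k+1)$-connectivity of $G$ (a standard consequence of $k$-extendability) together with a pigeonhole argument over the $k - 1$ matching pairs of $M$ to locate pairs fully contained in $N_G(u)$ and in $N_G(v)$, combined with connectivity of $G$ to find $m \in V(M)$ adjacent to some $w \in V(H) \setminus \{u, v\}$. The intended construction is a swap of the form $M^* = (M \setminus \{mm'\}) \cup \{um, m'w\}$, chosen so that every neighbor of $v$ lies in $V(M^*)$ while $v \notin V(M^*)$; then $v$ is isolated in $G - V(M^*)$ and $M^*$ fails to extend. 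The main technical difficulty is to show that such a swap is always available, requiring a careful combinatorial analysis that may branch further on whether the pigeonhole pairs for $u$ and $v$ coincide.
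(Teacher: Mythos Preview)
The paper does not give its own proof of this theorem; it is quoted from \cite{Plu88-2} and used only as a black-box lemma (in Corollary~\ref{bind-E(m,n)} and its toughness analogue). So there is nothing in the paper to compare your argument against.

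On your argument itself: the Tutte-theorem reduction and the case split on $|C_u|,|C_v|$ are correct but unnecessary. Once you know $u,v\notin V(M)$, simply observe that if $u$ (or $v$) has any neighbour $x\notin V(M)\cup\{v\}$, then $M\cup\{ux\}$ is a $k$-matching of $G$ extending to a perfect matching that avoids $e$, and you are done. This lands you directly in your ``remaining configuration'' $N_G(u),N_G(v)\subseteq V(M)\cup\{u,v\}$ without any barrier-set analysis. There your swap idea is the right one, and the combinatorial analysis you fear is short rather than delicate: if no edge $mm'\in M$ has $m\in N_G(u)$ and $m'$ adjacent to some $w\notin V(M)\cup\{u,v\}$, let $A'$ be the set of $M$-partners of $A:=N_G(u)\cap V(M)$; then every vertex of $A'\cup\{u,v\}$ has all its $G$-neighbours inside $V(M)\cup\{u,v\}$, so $V(M)\setminus A'$ is a cutset of size at most $(2k-2)-|A|\le k-2$ separating $A'\cup\{u,v\}$ from the $|V(G)|-2k\ge 2$ remaining vertices of $G$. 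This contradicts the $(k+1)$-connectedness you already invoked, so the swap $M^*=(M\setminus\{mm'\})\cup\{um,m'w\}$ is always available and your proof closes.
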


%%%%%%%%%%%%%%%%%%%%%%%%%%%%%%%%%%%%%%%%%%%%%%%%%%%%%%%%%%

\section{Binding Number and Matching Extendability}
\label{sec:bing}

\cite{Chen95} proved that
a graph $G$ of even order at least $2m + 2$ is $m$-extendable if $b(G) >
\max\{m, (7m + 13)/12\}$. \cite{RW02} proved a stronger result (in most cases). We state their result in a
simpler but slightly weaker form below.

\begin{theorem}[\cite{RW02}]\label{Bind-k-ex}
For any positive real number $\varepsilon$ and nonnegative integer $m$,
there exists an integer $N = N(\varepsilon, m)$ such that every graph $G$ of even order greater
than $N$ and $b(G)>4/3 + \varepsilon$ is $m$-extendable.
\end{theorem}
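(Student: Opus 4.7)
My plan is to argue by contradiction, invoking Theorem~\ref{GE645} to expose a defective Tutte-like set and then closing it off with several applications of the binding inequality. Suppose $G$ has sufficiently large even order $n$ and $b(G)>4/3+\varepsilon$, yet some $m$-matching $M$ of $G$ fails to extend to a perfect matching. Then $G-V(M)$ has no perfect matching, so by Theorem~\ref{GE645} there exists $S\subseteq V(G)\setminus V(M)$ for which the number $q$ of factor-critical components of $G-V(M)-S$ satisfies $q\geq |S|+2$. Setting $T:=V(M)\cup S$ gives $|T|=|S|+2m$ and $q\geq |T|-2m+2$. The aim is to play this structural lower bound on $q$ against the upper bounds that the binding number forces on the component structure of $G-T$, and to show that for $n$ large enough the two sides are incompatible.

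The first step is a minimum-degree estimate, obtained by applying the binding inequality to the set of non-neighbours of a non-universal vertex $v$: this forces $d(v)>(n-1)(1+3\varepsilon)/(4+3\varepsilon)$, so, after disposing of the trivial case of a universal vertex, $\delta(G)\geq (1+3\varepsilon)(n-1)/(4+3\varepsilon)$. Since every vertex of every odd component $C_i$ of $G-T$ has its neighbours confined to $V(C_i)\cup T$, this yields $|V(C_i)|\geq \delta(G)-|T|+1$, so every odd component is of linear size in $n$ unless $|T|$ itself is already a constant fraction of $n$.

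Next, I would apply the binding inequality to two further sets. Taking $X=D:=\bigcup_i V(C_i)$, whose neighbourhood lies in $D\cup T$, gives $|D|<3|T|/(1+3\varepsilon)$ (with a harmless modification when $N_G(D)=V(G)$, using instead one fewer vertex from $D$). Taking $X$ to be the set of singleton components of $G-T$, whose neighbourhood is entirely inside $T$, gives an upper bound of the form $|X|<3|T|/(4+3\varepsilon)$. Since non-singleton odd components contribute at least three vertices each to $D$, these two bounds together cap $q$ above in terms of $|T|$ alone.

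The contradiction then emerges from a case split on the magnitude of $|T|$. If $|T|$ is small compared with $n$, the minimum-degree estimate forces each odd component to have linearly many vertices, so $q$ is bounded by a constant while $q\geq |T|-2m+2$ remains enforceable only for small $|T|$, which is incompatible with the premise once $n$ is large. If instead $|T|$ is a constant fraction of $n$, the joint system $q(\delta(G)-|T|+1)\leq |D|<3|T|/(1+3\varepsilon)$ with $q\geq |T|-2m+2$ becomes infeasible once $n$ exceeds a threshold $N(\varepsilon,m)$. I expect the main obstacle to be arithmetic in nature: at $\varepsilon=0$ each of the bounds above is essentially tight, which is precisely why the classical Woodall bound $b\geq 4/3$ does not already yield $m$-extendability, and converting the strict improvement $\varepsilon>0$ into a genuinely usable quantitative gap uniformly across both the singleton versus non-singleton dichotomy and the small- versus large-$|T|$ dichotomy is what forces the sharp-looking condition $n>N(\varepsilon,m)$.
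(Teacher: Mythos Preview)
The paper does not prove this theorem directly---it is quoted as a result of Robertshaw and Woodall---but its proof of the generalization Theorem~\ref{Thm2-1} (take $g_0=3$) furnishes one, and that argument differs from yours in an essential way. After extracting the factor-critical components $C_1,\dots,C_{s+q}$ (ordered by size, with $C_1,\dots,C_l$ the singletons), the paper applies the binding inequality to the pair $U=\bigcup_{i\ge 2}V(C_i)$ and $W=V(G)\setminus(U\cup S\cup V(M))$. The ratio $f=|N(U)|/|U|$ is squeezed through a short chain of claims to bound both $s=|S|$ and $l$ by constants depending only on $k$ and $\varepsilon$; once these are bounded, the complementary ratio $h=|N(W)|/|W|$ tends to $1$ as $|G|\to\infty$, contradicting $b(G)>4/3+\varepsilon$. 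No minimum-degree estimate and no case split on $|T|$ enter.

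Your sketch has a real gap in the second case. The inequality $q(\delta(G)-|T|+1)\le |D|$ carries information only while $|T|\le\delta(G)$, that is, roughly while $|T|\le n(1+3\varepsilon)/(4+3\varepsilon)$; above that threshold the factor $\delta(G)-|T|+1$ is nonpositive and the ``joint system'' you display is vacuously feasible. In that range the remaining estimates you list combine (via $|D|\ge 3q-2s_1$, since non-singleton odd components have at least three vertices) only to
\[
q<|T|\Bigl(\frac{1}{1+3\varepsilon}+\frac{2}{4+3\varepsilon}\Bigr),
\]
and for every $\varepsilon<(\sqrt{3}-1)/3$ the bracket exceeds $1$, so $q\ge|T|-2m+2$ is not excluded and no bound on $|T|$ results. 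What is missing is precisely an application of the binding inequality to a \emph{complementary} set---the paper's $W$, or something like $V(G)\setminus(T\cup\{x\})$ for a singleton $x$---which is what rules out this regime. (A smaller point: your ``harmless modification'' when $N_G(D)=V(G)$ is not harmless; deleting one vertex $v$ from a non-singleton component still leaves $v\in N_G(D\setminus\{v\})$, so one must delete an entire component, which is exactly why the paper works with $U=D\setminus V(C_1)$ rather than $D$.)
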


In this section, we extend the above result using a different proof technique.
\begin{theorem}\label{Thm2-1}
Let $k,g$ be two positive  integers such that $g\geq 3$ and let  $g_0=2\lfloor\frac{g}{2}\rfloor+1$.    For any positive real number $\varepsilon<\frac{1}{g_0}$,  there exists $N=N(\varepsilon, k, g_0)$ such that for every graph $G$ with order at least $N$ and girth $g$, if $b(G)>\frac{g_0+1}{g_0}+\varepsilon$, then $G$ is $k$-extendable.
\end{theorem}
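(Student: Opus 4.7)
The plan is to argue by contradiction, combining Theorem~\ref{GE645} with the girth condition to manufacture a vertex set that violates the binding-number hypothesis.

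Suppose $G$ meets the hypotheses but fails to be $k$-extendable. Then some $k$-matching $M$ does not extend, so $H:=G-V(M)$ has no perfect matching, and Theorem~\ref{GE645} yields $S\subseteq V(H)$ with $fc(H-S)\geq|S|+2$. Partition the factor-critical components of $H-S$ into the $q_1$ singletons (whose union is $X_1$) and the $q_2$ components of order at least three (whose union is $X_2$), so $q_1+q_2\geq|S|+2$. Since $G$ has girth $\geq g$ and every connected factor-critical graph on $\geq 3$ vertices is 2-edge-connected, each of the $q_2$ larger components contains a cycle of length $\geq g$; being of odd order, it has order at least $g_0$. Thus $|X_2|\geq g_0 q_2$.

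Set $X:=X_1\cup X_2$ and $T:=S\cup V(M)$. Since $X_1$ is independent in $G$ and the only edges leaving $X$ land in $T$, one has $N_G(X)\subseteq X_2\cup T$, so
\[
\frac{|N_G(X)|}{|X|}\leq\frac{|X_2|+|S|+2k}{q_1+|X_2|}=1+\frac{|S|+2-q_1}{|X|}+\frac{2(k-1)}{|X|}.
\]
If $q_1>|S|+2$ the middle term is negative; otherwise $|X|\geq q_1+g_0(|S|+2-q_1)$ forces $(|S|+2-q_1)/|X|\leq 1/g_0$. In either case
\[
\frac{|N_G(X)|}{|X|}\leq\frac{g_0+1}{g_0}+\frac{2(k-1)}{|X|}.
\]
When $|X|\geq 2(k-1)/\varepsilon$ this is at most $(g_0+1)/g_0+\varepsilon<b(G)$, and $N_G(X)\neq V(G)$ provided $V(G)\setminus(X_2\cup T)$ is non-empty, which holds for $n$ suitably large because it contains either a singleton of $X_1$ or a vertex of a non-factor-critical component of $H-S$; the binding-number hypothesis is then violated.

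Otherwise $|X|<2(k-1)/\varepsilon$, so $q_1$, $q_2$, $|S|$, and hence $|T|=|S|+2k$ are all bounded by constants depending on $\varepsilon,k,g_0$. Set $Y:=V(G)\setminus(X\cup T)$; then $Y$ lies in the non-factor-critical components of $H-S$, so $N_G(Y)\subseteq Y\cup T$, $X\cap N_G(Y)=\emptyset$, and $|N_G(Y)|/|Y|\leq 1+|T|/|Y|$. Choosing $N=N(\varepsilon,k,g_0)$ large enough that $|Y|\geq g_0|T|$, this ratio is at most $(g_0+1)/g_0$, again contradicting $b(G)$ (with $N_G(Y)\neq V(G)$ secured by $X\neq\emptyset$). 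The main technical obstacle is the algebraic tightening of Step 1 and, more subtly, the verification that $N_G(X)\neq V(G)$ in the degenerate configurations where $X_1=\emptyset$ and $H-S$ contains only factor-critical components; there one replaces $X$ by a sub-union of the large factor-critical components and exploits that distinct such components are non-adjacent in $G$. The girth hypothesis enters the proof only through $|X_2|\geq g_0 q_2$, which is precisely what reduces the binding-number threshold from Woodall's $4/3$ (the case $g_0=3$) to the general $(g_0+1)/g_0$.
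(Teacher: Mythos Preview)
Your argument follows the same skeleton as the paper's proof: assume non-extendability, invoke Theorem~\ref{GE645} to get $S$ with many factor-critical components in $G-V(M)-S$, use the girth hypothesis to force each non-singleton factor-critical component to have order at least $g_0$, and then test the binding number on two complementary sets (the union of factor-critical components versus the rest of the graph) to reach a contradiction for large $|G|$.

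The organizational difference is this. You take $X$ to be the union of \emph{all} factor-critical components and $Y$ the remainder; the paper instead takes $U$ to be all factor-critical components \emph{except one} (the smallest, $C_1$) and $W$ the remainder (so $C_1\subseteq W$). The paper's choice guarantees $N(U)\neq V(G)$ and $N(W)\neq V(G)$ automatically, since $C_1$ and $C_2$ are always available as witnesses. Your choice forces you to treat separately the degenerate configuration $X_1=\emptyset$, $Y=\emptyset$, which you flag as the ``main technical obstacle'' and propose to fix by passing to a sub-union---and that fix is exactly the paper's $U$. So what you defer to a remark the paper builds in from the start.

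On the other hand, your inequality
\[
\frac{|N_G(X)|}{|X|}\le \frac{g_0+1}{g_0}+\frac{2(k-1)}{|X|}
\]
is a genuinely cleaner replacement for the paper's chain of Claims~1--4: the paper spends those claims bounding $s$, $l$, and $\sum_{i\ge r}|C_i|$ separately in order to conclude that $|U|$ is bounded, whereas your single estimate dispatches the ``$|X|$ large'' case in one stroke and immediately reduces to ``$|X|$ bounded'', after which both proofs finish identically by sending $|G|\to\infty$. One small slip: the sentence ``which holds for $n$ suitably large because it contains either a singleton of $X_1$ or a vertex of a non-factor-critical component'' is not correct as written---large order does not by itself force $X_1\cup Y\neq\emptyset$---but you recover this in the next paragraph via the sub-union device.
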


\begin{proof}
Suppose that the result does not hold. Then there exists a graph $G$ with  order at least $N$   and $b(G)> \frac{g_0+1}{g_0}+\varepsilon$ such that $G$ is not $k$-extendable. By the definition of $k$-extendable graphs, there exists a $k$-matching $M$ such that $G-V(M)$ contains no perfect matchings.
From Theorem \ref{GE645}, there exists $S\subset V(G)-V(M)$ such that
\[
fc(G-V(M)-S)= s+q,
\]
where $q\geq 2$ is even by parity and $s:=|S|$. Let $C_1,\ldots, C_{s+q}$ denote these factor-critical components of $G-S-V(M)$ such that $|C_1|\leq \cdots\leq |C_{s+q}|$.
Without loss of generality, we assume $|C_1|=\ldots =|C_l|=1$. Note that $|C_i| \geq 3$ implies $g(C_i)\geq g$ as $C_i$ is 2-edge-connected. Thus we have $|C_i|\geq g_0$ for $l+1\leq i\leq s+q$.
Write $U=\cup_{i=2}^{s+q}V(C_i)$ and $W=V(G)-U-S-V(M)$. Note that $V(C_1)\subseteq W$ and $s+q \geq 2$. So we have $U \neq \emptyset$ and $W\neq \emptyset$.  One may see that $N(U) \cap W = \emptyset$ and $N(W) \cap U = \emptyset$. Hence $N(U) \not= V(G)$ and $N(W) \not= V(G)$. Denote $r=\max\{2,l+1\}$. Thus we have

\begin{align*}
b(G)&\leq \min\{\frac{|N(U)|}{|U|},\frac{|N(W)|}{|W|}\}\\
&\leq\min\{\frac{2k+s+\sum_{i=r}^{s+q} |C_i|}{r-2+\sum_{i=r}^{s+q} |C_i|}, \frac{|G|-\sum_{i=2}^{s+q} |C_i|}{|G|-2k-s-\sum_{i=2}^{s+q} |C_i|}\}\\
&=\min\{f,h\}
\end{align*}
where $f=\frac{2k+s+\sum_{i=r}^{s+q} |C_i|}{r-2+\sum_{i=r}^{s+q} |C_i|}$ and $h=\frac{|G|-\sum_{i=2}^{s+q} |C_i|}{|G|-2k-s-\sum_{i=2}^{s+q} |C_i|}$.

\medskip
\textbf{ Claim 1.~}   $2k+s > r-2$.
\medskip

This claim is implied by the following inequality:
\begin{align*}
1<\frac{g_0+1}{g_0}+\varepsilon < b(G) \leq f = \frac{2k+s+\sum_{i=r}^{s+q} |C_i|}{r-2+\sum_{i=r}^{s+q} |C_i|},
\end{align*}

\medskip
\textbf{ Claim 2.~}   $\sum_{i=r}^{s+q} |C_i|< g_0(2k+s)$.
\medskip

Suppose that $\sum_{i=r}^{s+q} |C_i|\ge g_0(2k+s)$. By Claim 1, we have
\begin{align*}
b(G)\leq f&\leq \frac{2k+s+g_0(2k+s)}{r-2+g_0(2k+s)}\\
&\leq\frac{2k+s+g_0(2k+s)}{g_0(2k+s)}\\
&= \frac{g_0+1}{g_0},
\end{align*}
a contradiction.

\medskip
\textbf{ Claim 3.~}   $s< \max\{2(g_0-1)k,\frac{2k}{g_0\varepsilon}\}$.
\medskip

Suppose that $s\ge \max\{2(g_0-1)k,\frac{2k}{g_0\varepsilon}\}$.
Since $s\geq 2(g_0-1)k$, we infer that
\begin{align}\label{g0-ratio}
\frac{s(g_0+1)+2k}{g_0s}\leq\frac{g_0}{g_0-1}.
\end{align}

If
\begin{align}\label{small-s}
\frac{g_0+1}{g_0}+\varepsilon < \frac{(g_0+1)s+2k}{g_0s},
\end{align}
then
$s<\frac{2k}{g_0\varepsilon}$, a contradiction. So it is enough for us to show  (\ref{small-s}).
Consider $q< r-1$. Then
 we infer that
\begin{align*}
\frac{g_0+1}{g_0}+\varepsilon < f &\leq \frac{2k+s+g_0(s+q-r+1)}{r-2+g_0(s+q-r+1)} \quad\quad\mbox{(by Claim 1 and $\sum_{i=r}^{s+q} |C_i| \ge g_0(s+q-r+1)$})\\
&=\frac{s(g_0+1)+2k+g_0(q-r+1)}{g_0s+g_0(q-r+1)+r-2}\\
&<\frac{s(g_0+1)+2k+g_0(q-r+1)}{g_0s+g_0(q-r+1)+r-1-q} \\
&=\frac{s(g_0+1)+2k-g_0(r-1-q)}{g_0s-(g_0-1)(r-1-q)}\\
&\leq \frac{(g_0+1)s+2k}{g_0s}. \quad\quad\mbox{(by (\ref{g0-ratio}) and $g_0s+g_0(q-r+1)>q-r+1$)}
\end{align*}
Next, we consider $q\ge r-1$, then
\begin{align*}
\frac{g_0+1}{g_0}+\varepsilon < f &\leq \frac{2k+s+g_0(s+q-r+1)}{r-2+g_0(s+q-r+1)} \quad\quad\mbox{(by Claim 1 and $\sum_{i=r}^{s+q} |C_i| \ge g_0(s+q-r+1)$})\\
&\leq \frac{2k+s+g_0(s+q'-r+1)}{r-2+g_0(s+q'-r+1)} \quad\quad\mbox{(for any $q'$ satisfying $q \geq q' \geq r-1$}) \\
&=\frac{s(g_0+1)+2k}{g_0s+r-2}\\
&\leq \frac{(g_0+1)s+2k}{g_0s}.
\end{align*}
This completes the proof of Claim 3.

\medskip
\textbf{ Claim 4.~}    $l< \max\{2g_0k+1,\frac{2k}{g_0\varepsilon}+1\}$.
\medskip

Suppose that $l\ge \max\{2g_0k+1,\frac{2k}{g_0\varepsilon}+1\}$.
From Claim 3, we have
\begin{align}\label{s-upp}
s<\max\{2(g_0-1)k,\frac{2k}{g_0\varepsilon}\}.
\end{align}
From (\ref{s-upp}), we see $l\geq s+1$ and thus
\begin{align*}
\frac{g_0+1}{g_0}+\varepsilon < \ f & = \frac{2k+s+\sum_{i=r}^{s+q} |C_i|}{r-2+\sum_{i=r}^{s+q} |C_i|}\\
&=\frac{2k+s+\sum_{i=r}^{s+q} |C_i|}{l-1+\sum_{i=r}^{s+q} |C_i|}\\
&\leq \frac{2k+s}{l-1} \quad \quad \mbox{(by Claim 1)}\\
&\leq \frac{2k+l-1}{l-1}\\
&\leq \frac{g_0+1}{g_0}, \quad \quad \mbox{(since $l\geq 2g_0k+1$)}
\end{align*}
a contradiction.

 From Claim 2, we have
 \begin{align}\label{sum-Ci}
\sum_{i=r}^{s+q} |C_i|< g_0(2k+s).
\end{align}
 Thus
\begin{align*}
\frac{g_0+1}{g_0}+\varepsilon < h&=\frac{|G|-\sum_{i=2}^{s+q} |C_i|}{|G|-2k-s-\sum_{i=2}^{s+q} |C_i|}\\
&= \frac{|G|-(r-2)-\sum_{i=r}^{s+q} |C_i|}{|G|-2k-s-(r-2)-\sum_{i=r}^{s+q} |C_i|}\\
&\leq \frac{|G|-(r-2)-g_0(2k+s)}{|G|-2k-s-(r-2)-g_0(2k+s)} \quad\mbox{(by (\ref{sum-Ci}))}\\
&\leq\frac{|G|-l-g_0(2k+s)}{|G|-2k-s-l-g_0(2k+s)} \quad\mbox{(since $r=\max\{2, l+1\}\le l+2$)}\\
&= \frac{|G|-2kg_0-g_0s-l}{|G|-2k-2kg_0-(g_0+1)s-l},
\end{align*}
i.e.,
\begin{align}\label{contra}
\frac{g_0+1}{g_1}+\varepsilon < \frac{|G|-2kg_0-g_0s-l}{|G|-2k-2kg_0-(g_0+1)s-l}.
\end{align}
Claims 2 and 3 imply that $s, l$ are bounded, therefore
\[
\lim_{|G|\rightarrow \infty}  \frac{|G|-2kg_0-g_0s-l}{|G|-2k-2kg_0-(g_0+1)s-l}=1.
\]

For a large $N$, (\ref{contra}) leads to a contradiction when $|G|>N$. This completes the proof.
\end{proof}

Clearly, Theorem \ref{Thm2-1} is a generalization of Theorem \ref{Bind-k-ex}. For connected graphs $G$, the girth $g$ of $G$ is at least three. Setting $g_0=3$, we obtain the following results regarding the general matching extensions (i.e., stronger properties).

\begin{coro}\label{bind-n-k-ex}
Let $n, k$ be two positive integers. For any $\varepsilon<1/3$, there exists $N=N(\varepsilon,n,k)$ such that  if $b(G)>\frac{4}{3}+\varepsilon$ and the order of $G$ is at least $N$, then $G$ is $(n,k)$-extendable.
\end{coro}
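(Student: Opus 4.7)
The plan is to mimic the proof of Theorem~\ref{Thm2-1} with $g_0=3$, enlarging the Gallai--Edmonds barrier set to absorb the $n$ forbidden vertices that $(n,k)$-extendability insists on removing. Assume for contradiction that there is a graph $G$ with $b(G) > 4/3 + \varepsilon$ and $|V(G)| \ge N$ (for some $N=N(\varepsilon,n,k)$ fixed at the end) that is not $(n,k)$-extendable. Then one can pick $T \subseteq V(G)$ with $|T|=n$ and a $k$-matching $M$ in $G-T$ such that $G-T-V(M)$ has no perfect matching. Applying Theorem~\ref{GE645} to $G-T-V(M)$ produces $S \subseteq V(G)\setminus(T\cup V(M))$ with $fc\bigl((G-T-V(M))-S\bigr) \ge s+2$, where $s=|S|$. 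Amalgamating, set $S^{\ast}:=T\cup S$, so that $G-S^{\ast}-V(M)$ has at least $s+2$ factor-critical components.

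Now rerun the remainder of the proof of Theorem~\ref{Thm2-1} with $S^{\ast}$ in place of the original barrier. Enumerate the factor-critical components of $G-S^{\ast}-V(M)$ as $C_1,\ldots,C_t$ with $|C_1|\le\cdots\le|C_t|$ and $t\ge s+2$; since non-singleton factor-critical components are $2$-edge-connected, $|C_i|\ge 3 = g_0$ for $i>l$, where $l$ is the number of singletons. Set $r=\max\{2,l+1\}$, $U=\bigcup_{i=2}^t V(C_i)$, and $W=V(G)\setminus U\setminus S^{\ast}\setminus V(M)$. Both $U$ and $W$ are nonempty (containing $C_2$ and $C_1$ respectively) and are separated in $G-S^{\ast}-V(M)$, so neither $N(U)$ nor $N(W)$ equals $V(G)$, and
\[
b(G)\;\le\;\min\!\left\{\frac{2k+n+s+\sum_{i=r}^{t}|C_i|}{(r-2)+\sum_{i=r}^{t}|C_i|},\;\frac{|G|-\sum_{i=2}^{t}|C_i|}{|G|-(2k+n+s)-\sum_{i=2}^{t}|C_i|}\right\}.
\]
Running the analogues of Claims~1--4 in the original proof with $s$ replaced by $n+s$ in the numerator of $f$ and the denominator of $h$ yields uniform bounds $s \le s_0(\varepsilon,n,k)$ and $l \le l_0(\varepsilon,n,k)$. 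Substituting these into the $h$-bound and letting $|G|\to\infty$ drives the right-hand side to $1 < 4/3 + \varepsilon \le b(G)$, a contradiction.

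The only delicate point is verifying that the arithmetic chains in Claims~2--4 of Theorem~\ref{Thm2-1} still close after the shift $s\mapsto n+s$ in the numerator of $f$. This is essentially routine: $n$ is a fixed constant, contributing only a bounded additive offset, and the monotonicity arguments that reduce $f$ and $h$ to expressions depending only on $s,l,k,n,\varepsilon$ are unchanged. Choosing $N$ above the crossover where the $h$-bound falls below $4/3+\varepsilon$ completes the argument.
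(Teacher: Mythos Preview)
Your argument is correct but takes a substantially different route from the paper. The paper's proof is two lines: since $b(G)>4/3+\varepsilon$, Theorem~\ref{Bind-k-ex} (equivalently Theorem~\ref{Thm2-1} with $g_0=3$) gives that $G$ is $(k+2n)$-extendable, i.e.\ $(0,k+2n)$-extendable, for $|G|$ large; then iterating Theorem~\ref{LY01} converts $(0,k+2n)$-extendability into $(n,k)$-extendability. No barrier computation is redone at all.

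Your approach instead reopens the proof of Theorem~\ref{Thm2-1} and absorbs the $n$ deleted vertices into the barrier set $S^\ast=T\cup S$. This is valid: the only change in $f$ and $h$ is that $2k$ becomes $2k+n$ in the numerator of $f$ and in the subtracted term of $h$, while the component count remains $\ge s+2$; Claims~1--4 then go through verbatim with $2k$ replaced by $2k+n$, yielding bounds $s\le s_0(\varepsilon,n,k)$ and $l\le l_0(\varepsilon,n,k)$, and the $h\to 1$ contradiction follows. What the paper's route buys is brevity and modularity---the structural implication of Theorem~\ref{LY01} does all the work, so the binding-number analysis need only be carried out once (for ordinary $m$-extendability). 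What your route buys is self-containment: it avoids relying on Theorem~\ref{LY01} and shows directly that the Gallai--Edmonds machinery of Theorem~\ref{Thm2-1} is robust under the extra deletion of a bounded vertex set.
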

\begin{proof}
Since $b(G)>\frac{4}{3}+\varepsilon$, by Theorem \ref{Bind-k-ex}, for a sufficiently large $|G|$, $G$ is $(k+2n)$-extendable or $(0, k+2n)$-extendable. By Theorem \ref{LY01}, $G$ is $(n,k)$-extendable.
\end{proof}

With similar discussion as in Corollary \ref{bind-n-k-ex}, we can deduce $E(m, n)$-extendability with the same conditions, which is a result proved in \cite{PS2016} but here we gave a much shorter proof.

\begin{coro}\label{bind-E(m,n)}
Let $m,n$ be two positive integers. For any $\varepsilon<\frac{1}{3}$, there exists $N=N(\varepsilon,m,n)$ such that for every graph $G$ with order at least $N$, if $b(G)>\frac{4}{3}+\varepsilon$, then $G$ is $E(m,n)$-extendable.
\end{coro}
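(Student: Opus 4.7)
The plan is to mimic the proof of Corollary \ref{bind-n-k-ex}, but to replace the role of Theorem \ref{LY01} (which exchanges $(n,k)$-extendability with $(n{+}2,k{-}2)$-extendability) by iterated edge deletions supplied by Theorem \ref{P88}. The starting point will again be Theorem \ref{Bind-k-ex}, now applied with parameter $m+n$: this yields an integer $N_0 = N(\varepsilon, m+n)$ such that every graph $G$ with $|V(G)| \geq N_0$ and $b(G) > 4/3 + \varepsilon$ is $(m+n)$-extendable. Setting $N := N_0$ supplies the order bound required by the corollary.

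Given such a $G$, I would verify $E(m,n)$-extendability directly from the definition. Fix an arbitrary $n$-matching $N_1 = \{e_1, \ldots, e_n\}$ of $G$; the task reduces to showing that $G - N_1$ is $m$-extendable. The idea is to peel off the edges of $N_1$ one at a time: starting from the $(m+n)$-extendable graph $G$, apply Theorem \ref{P88} with edge $e_1$ to deduce that $G - e_1$ is $(m+n-1)$-extendable, then apply it again with $e_2$, and continue. After $n$ iterations the resulting graph $G - N_1$ is $m$-extendable, as required.

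The only bookkeeping to verify is the connectedness hypothesis of Theorem \ref{P88} at each stage, i.e., that $G - \{e_1, \ldots, e_i\}$ is connected for $0 \leq i \leq n-1$. This follows from the well-known fact that an $\ell$-extendable graph with $\ell \geq 1$ is $(\ell+1)$-connected; since the extendability parameter of the intermediate graphs is $m+n-i \geq m+1 \geq 2$ throughout the iteration (using that $m \geq 1$), connectedness is preserved. I do not anticipate any genuine obstacle here: the corollary is essentially a clean reduction from $E(m,n)$-extendability to $(m+n)$-extendability, delivered by combining Theorems \ref{Bind-k-ex} and \ref{P88}, and the shortening relative to \cite{PS2016} comes precisely from not having to redo any matching-extension analysis in the binding-number regime.
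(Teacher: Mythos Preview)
Your proposal is correct and follows essentially the same route as the paper: invoke Theorem~\ref{Bind-k-ex} with parameter $m+n$, then iterate Theorem~\ref{P88} to strip the $n$ edges of the chosen matching one at a time, dropping the extendability index by one at each step. Your explicit verification of the connectedness hypothesis of Theorem~\ref{P88} at each stage (via the $(\ell{+}1)$-connectivity of $\ell$-extendable graphs) is a point the paper leaves implicit.
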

\begin{proof}
Since $b(G)>\frac{4}{3}+\varepsilon$, by Theorem \ref{Bind-k-ex}, for a sufficiently large $|G|$, $G$ is $(m+n)$-extendable. Let $M = \{e_1, e_2, \dots, e_n\}$ be any $n$-matching. By Theorem \ref{P88}, $G_1=G-e_1$ is $(m+n-1)$-extendable. Applying Theorem \ref{P88} recursively, we conclude that $G_n=G-\{e_1, e_2, \dots, e_n\}$ is $m$-extendable, that is, $G$ is $E(m,n)$-extendable.
\end{proof}

\noindent\textbf{Remark:} Clearly, Corollaries \ref{bind-n-k-ex} and \ref{bind-E(m,n)} can be easily stated in terms of the more general condition $b(G)>\frac{g_0+1}{g_0}+\varepsilon$. However, without the parameter $g$, the results look more neatly.
%%%%%%%%%%%%%%%%%%%%%%%%%%%%%%%%%%%%%%%%%%%%%%%%%%%%%%%%%%

\section{Toughness and Matching Extendability}
\label{sec:toughness}

It is not hard to construct examples with any given large toughness, but do not have $(n, k)$-extendability or $E(m, n)$-extendability. Therefore toughness alone is insufficient to guarantee the general matching extension properties. However, with an additional condition in terms of connectivity, it only requires slightly large than 1-toughness to deduce the desired matching extendability.

\begin{theorem}\label{tough}
Let $n$ be a positive integer, $\varepsilon$ be a small positive constant and $G$ be a graph with $t(G)\geq 1+\varepsilon$ and $|V(G)|  \equiv n \pmod 2$. If $\kappa(G)>\frac{(n-2)(1+\varepsilon)}{\varepsilon}$, then $G$ is $n$-factor-critical.
\end{theorem}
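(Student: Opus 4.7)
The plan is to argue by contradiction via Tutte's Theorem \ref{Tutte}. Suppose that $G$ is not $n$-factor-critical, so there is a set $T \subseteq V(G)$ with $|T| = n$ such that $G - T$ admits no perfect matching. Since $|V(G)| \equiv n \pmod{2}$, the graph $G - T$ has even order, so Tutte's theorem supplies a set $S \subseteq V(G) \setminus T$ with $c_0(G - T - S) \geq |S| + 1$; a standard parity check (comparing $c_0$ to $|V(G-T)| - |S|$ modulo $2$) then upgrades this to $c_0(G - T - S) \geq |S| + 2$.

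Set $U = T \cup S$. Since $c(G - U) \geq c_0(G - T - S) \geq |S| + 2 \geq 2$, the set $U$ is a genuine vertex cut of $G$, and both hypotheses apply to it. From the toughness condition I would derive
\[
n + |S| \;=\; |U| \;\geq\; (1 + \varepsilon)\, c(G - U) \;\geq\; (1 + \varepsilon)(|S| + 2),
\]
which rearranges to the upper bound $|S| \leq \frac{n - 2 - 2\varepsilon}{\varepsilon} = \frac{n-2}{\varepsilon} - 2$. From the connectivity assumption I would derive
\[
n + |S| \;=\; |U| \;\geq\; \kappa(G) \;>\; \frac{(n-2)(1+\varepsilon)}{\varepsilon},
\]
which, after simplifying $\frac{(n-2)(1+\varepsilon)}{\varepsilon} - n = \frac{n-2}{\varepsilon} - 2$, rearranges to the matching lower bound $|S| > \frac{n-2}{\varepsilon} - 2$. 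These two inequalities contradict each other, completing the proof.

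There is no real conceptual obstacle here; the content of the theorem is the observation that the toughness condition, applied to any potential Tutte obstruction $U = T \cup S$, forces $|U|$ to be bounded above by an expression depending only on $n$ and $\varepsilon$, while the connectivity hypothesis is calibrated precisely to make $|U|$ exceed that same bound. The only minor thing to watch is the small-$n$ range: for $n = 1$ or $n = 2$ the toughness step alone already yields $|S| < 0$, which is absurd, so no violator $S$ exists and the connectivity hypothesis plays no active role; the connectivity condition only becomes essential for $n \geq 3$.
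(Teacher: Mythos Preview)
Your proof is correct and follows essentially the same approach as the paper: both argue by contradiction via Tutte's theorem, observe that the union of the deleted $n$-set and the Tutte obstruction is a cut, and then play the toughness bound against the connectivity bound on the size of that cut. The only differences are cosmetic---you swap the names $S$ and $T$ relative to the paper and isolate the contradiction as two opposing inequalities on $|S|$, whereas the paper chains the same estimates into a single bound $1+\varepsilon \le \frac{\kappa}{\kappa-n+2}$ and solves for $\kappa$.
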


\begin{proof}
Suppose that $G$ is not $n$-factor-critical. By the definition of $n$-factor-critical, there exists a subset $S$ of order $n$ such that $G-S$ contains no perfect matchings. By Theorem \ref{Tutte}, there exists $T\subseteq V(G)-S$ such that
\[
q=c_0(G-S-T)\geq |T|+2.
\]
Note that $q\geq 2$. So
\begin{align*}
1+\varepsilon\leq t(G)&\leq \frac{|S|+|T|}{|T|+2}\\
&\leq\frac{\kappa}{\kappa-n+2}, \quad \quad \mbox{(since $\kappa \leq n+|T|$)}
\end{align*}
which implies
\[
\kappa\leq \frac{(n-2)(1+\varepsilon)}{\varepsilon},
\]
a contradiction. This completes the proof.
\end{proof}

\noindent\textbf{Remark:} The connectivity condition in the theorem is sharp. Let $n,t$ be two positive integers and $\varepsilon$ be a small constant such that  $n+t<\frac{(n-2)(1+\varepsilon)}{\varepsilon}$.  Let $G_1 = K_{n+t}$, $G_2 = (t+1)K_1$, and  $G_3 = K_{r}$ ($r$ is any positive integer). Define $G = G_1+(G_2 \cup G_3)$, that is, $G$ is a graph obtained by connecting each vertex in $G_1$ to each vertex in $G_2$ and $G_3$. Let $S=V(G_1)$.  Then $S$ is a cut set of $G$ and thus $\kappa \leq n+t \leq \frac{(n-2)(1+\varepsilon)}{\varepsilon}$. It is easy to verify that
\[
t(G) = \frac{|S|}{c(G-S)}=\frac{n+t}{t+2}\geq 1+\varepsilon.
\]
However, for any set $R$ of $n$ vertices in $S$, $G-R$ has no perfect matchings. So $G$ is not $n$-factor-critical.\\

From Theorem \ref{tough}, it is easy to see the following.

\begin{coro}
Let $n,k$ be two positive integers.
Let $\varepsilon$ be a positive constant and $G$ be a graph with $t(G)\geq 1+\varepsilon$. If $\kappa(G)>\frac{(2k-2)(1+\varepsilon)}{\varepsilon}$, then $G$ is $k$-extendable.
\end{coro}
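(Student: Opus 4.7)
The plan is to derive this corollary as an immediate specialization of Theorem~\ref{tough} with the factor-critical parameter set equal to $2k$, combined with the elementary observation that $2k$-factor-criticality implies $k$-extendability. First, since $k$-extendability presupposes the existence of a perfect matching in $G$, I may assume $|V(G)|$ is even, so that the parity requirement $|V(G)| \equiv 2k \pmod{2}$ of Theorem~\ref{tough} is automatic. The hypothesis $\kappa(G) > (2k-2)(1+\varepsilon)/\varepsilon$ is exactly the connectivity bound of Theorem~\ref{tough} after substituting $n := 2k$, so Theorem~\ref{tough} applies directly and yields that $G$ is $2k$-factor-critical.

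The second step is the standard reduction from factor-criticality to matching extendability. Let $M = \{u_1v_1,\dots,u_kv_k\}$ be any $k$-matching in $G$. The endpoint set $V(M)$ has cardinality exactly $2k$, so by $2k$-factor-criticality the graph $G - V(M)$ contains a perfect matching $F$. The union $F \cup M$ is then a perfect matching of $G$ containing $M$, and since $M$ was arbitrary, $G$ is $k$-extendable. Note that $|V(G)| \geq 2k+2$ is forced implicitly because $G-V(M)$ must have at least two vertices to admit a nontrivial perfect matching when we want $M$ itself to be a proper sub-matching.

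There is essentially no substantive obstacle; the entire argument is a one-step application of Theorem~\ref{tough} together with a three-line matching union. One temptation is to invoke Theorem~\ref{LY01} to pass between $(n,k)$-extendability parameters, but that result only increases $n$ while decreasing $k$, so it does not convert $(2k,0)$-extendability into $(0,k)$-extendability, and the direct argument above is both shorter and self-contained. I would simply remark in passing that the parameter $n$ appearing in the corollary's statement plays no role in either the hypothesis or the conclusion, so the proof does not need to mention it further.
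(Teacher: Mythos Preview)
Your proof is correct and matches the paper's intended approach: the paper gives no explicit argument beyond ``From Theorem~\ref{tough}, it is easy to see the following,'' and your application of Theorem~\ref{tough} with $n:=2k$ followed by the elementary observation that $2k$-factor-criticality implies $k$-extendability is exactly the natural way to fill this in. Your side remark that the parameter $n$ in the corollary's statement is superfluous is also accurate.
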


With the same arguments as in the proof of Corollary \ref{bind-E(m,n)}, Theorem \ref{tough} implies the following.

\begin{coro}
Let $m,n$ be two positive integers. Let $\varepsilon$ be a positive constant and $G$ be a graph with $t(G)\geq 1+\varepsilon$. If $\kappa(G)>\frac{(2m+2n-2)(1+\varepsilon)}{\varepsilon}$, then $G$ is $E(m,n)$-extendable.
\end{coro}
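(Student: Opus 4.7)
The plan is to mirror the structure of the proof of Corollary \ref{bind-E(m,n)}, replacing the binding-number input with the toughness input of Theorem \ref{tough}. First, I would apply Theorem \ref{tough} with the parameter set to $2m+2n$ (playing the role of ``$n$'' in that theorem). The connectivity hypothesis $\kappa(G)>\frac{(2m+2n-2)(1+\varepsilon)}{\varepsilon}$ matches exactly, and the parity condition $|V(G)|\equiv 2m+2n \pmod 2$ is automatic since $G$ admits a perfect matching and $2m+2n$ is even. Theorem \ref{tough} therefore yields that $G$ is $(2m+2n)$-factor-critical.

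Next, I would record the simple observation that every $(2j)$-factor-critical graph is $j$-extendable: for any $j$-matching $L$, applying $(2j)$-factor-criticality to the vertex set $V(L)$ produces a perfect matching $F'$ of $G-V(L)$, and $L\cup F'$ is then a perfect matching of $G$ containing $L$. Taking $j=m+n$, this upgrades the previous step to the statement that $G$ is $(m+n)$-extendable.

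Finally, I would apply Theorem \ref{P88} iteratively, exactly in the manner of the proof of Corollary \ref{bind-E(m,n)}. Given an $n$-matching $N=\{e_1,\dots,e_n\}$ and an $m$-matching $M$ as in the definition of $E(m,n)$-extendability, delete the edges $e_1,\dots,e_n$ in succession; each deletion drops the extendability parameter by exactly one, so $G-N$ is $m$-extendable. Extending $M$ to a perfect matching $F$ of $G-N$ yields a perfect matching of $G$ that contains $M$ and avoids every edge of $N$, so $G$ is $E(m,n)$-extendable.

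I do not anticipate a genuine obstacle: the argument is almost entirely bookkeeping once Theorem \ref{tough} has been invoked. The only minor check is that each intermediate graph in the edge-deletion step is connected and $k$-extendable with $k\ge 1$, which follows from $m\ge 1$ together with the standard fact that a $k$-extendable graph with $k\ge 1$ is $(k+1)$-connected, ensuring that Theorem \ref{P88} applies at every stage.
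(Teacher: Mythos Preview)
Your proposal is correct and matches the paper's approach: the paper states that the corollary follows from Theorem \ref{tough} ``with the same arguments as in the proof of Corollary \ref{bind-E(m,n)},'' which is precisely what you do---apply Theorem \ref{tough} with parameter $2m+2n$ to obtain $(2m+2n)$-factor-criticality, pass to $(m+n)$-extendability (this is the content of the paper's preceding corollary), and then peel off the $n$ edges via Theorem \ref{P88}. Your extra care about connectivity in the iterated application of Theorem \ref{P88} is a nice touch that the paper leaves implicit.
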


%%%%%%%%%%%%%%%%%%%%%%%%%%%%%%%%%%%%%%%%%%%%%%%%%%%%%%%%%%%%%%%%

\acknowledgements
\label{sec:ack}
The authors are grateful to an anonymous referee for his/her useful suggestions.

\nocite{*}
\bibliographystyle{abbrvnat}
% use the following instead if you encounter problems
%\bibliographystyle{alpha}
\bibliography{references}
\label{sec:biblio}

\end{document}